\newtheorem{theorem}{Theorem}
\newtheorem{corollary}[theorem]{Corollary}
\newtheorem{lemma}[theorem]{Lemma}
\newtheorem{proposition}[theorem]{Proposition}
\newtheorem{remark}[theorem]{Remark}
\newenvironment{pfn}{\noindent{\em Proof}}{\hfill $\square$ \medskip}
\begin{document}
\title[Homogeneous CR 3-manifolds]{A characterization of homogeneous
three-dimensional CR manifolds}
\author{Jih-Hsin Cheng}
\address{Institute of Mathematics, Academia Sinica and National Center for
Theoretical Sciences, Taipei, Taiwan, R.O.C.}
\email{cheng@math.sinica.edu.tw}
\author{Andrea Malchiodi}
\address{Scuola Normale Superiore, Piazza dei Cavalieri 7, 50126 Pisa, ITALY}
\email{andrea.malchiodi@sns.it}
\author{Paul Yang}
\address{Princeton University, Department of Mathematics\\
Fine Hall, Washington Road, Princeton NJ 08544-1000 USA}
\email{yang@math.princeton.edu}
\subjclass{32V20, 53C17}
\keywords{Homogeneous CR 3-manifold, Rossi sphere, pseudohermitian
structure, pseudo-Einstein contact form}
\thanks{}

\begin{abstract}
We characterize homogeneous three-dimensional CR manifolds, in particular
Rossi spheres, as critical points of a certain energy functional that
depends on the Webster curvature and torsion of the pseudohermitian
structure.
\end{abstract}

\maketitle

\section{\textbf{Introduction and statement of the results}}

Let $(M,\xi )$ be a contact 3-manifold with contact structure $\xi .$ A CR
structure on $(M,\xi )$ is an endomorphism $J:\xi \rightarrow \xi $
satisfying $J^{2}=Id.$ With a choice of contact form $\theta $ (i.e. $\theta
|_{\xi }$ $=0$, $\theta \wedge d\theta $ $\neq $ $0)$ such that $d\theta
(\cdot ,J\cdot )$ $>$ $0,$ $(J,\theta )$ is called a (strictly pseudoconvex)
pseudohermitian structure. To such a structure one associates the
(Tanaka-Webster) scalar curvature $R$ and the torsion tensor $A_{11}$ with
norm $|A|_{J,\theta }^{2}$ $:=$ $h^{1\bar{1}}h^{1\bar{1}}A_{11}A_{\bar{1}%
\bar{1}}$ (see for instance \cite{Lee} for basic pseudohermitian geometry).
On $(M,\xi )$ $=$ $(S^{3},\hat{\xi}),$ the standard contact 3-sphere, there
exists a family of distinguished homogeneous pseudohermitian structures $%
(J_{(s)},$ $\hat{\theta}),$ called Rossi spheres, where $\hat{\theta}$ is
the standard contact form on $(S^{3},\hat{\xi}).$ See Subsection \ref%
{Subsection1-1} for a detailed description.

We recall that Rossi spheres ($s$ $\neq $ $0)$ are the simplest examples of
non-embeddable CR 3-manifolds which (two to one) cover embeddable ones in $%
\mathbb{C}^{3}$ \cite[pp. 324-325]{CS}. Apart from the non-embeddability
property, Rossi spheres provide counterexamples in conformal pseudohermitian
geometry. In relation to the problem of existence of minimizers for the CR
Yamabe problem, each Rossi sphere $(J_{(s)},$ $\hat{\theta}),$ $s$ $\neq $ $%
0,$ has negative pseudohermitian mass, as defined in \cite{CMY}, for $s$
close to $0$ while the infimum of the CR Sobolev quotient coincides with the
one for the standard $3$-sphere ($s$ $=$ $0$), but is not attained \cite{CMY}%
. The notion of pseudo-Einstein contact form plays an important role in CR
geometry. Geometrically it is characterized by a volume-normalization
condition while analytically in dimension 3 it relates $R$ to $A_{11}$ in
their first covariant derivatives as follows:%
\begin{equation}
R_{,1}=iA_{11,\bar{1}}  \label{pE-1}
\end{equation}

\noindent (cf. (\ref{pE})). Equation (\ref{pE-1}) is useful in simplifying
the expressions involving $R$ and $A_{11}.$ Among others, one can equate the
Burns-Epstein invariant to the total $Q^{\prime }$-curvature (up to a
negative constant) \cite[Theorem 1.2 on pp. 290-291]{CY}.

In this paper we exhibit a functional whose critical points (in both $J$ and 
$\theta $) characterize homogeneous pseudohermitian manifolds among
pseudo-Einstein ones. Precisely define the following energy functional%
\begin{equation}
E(J,\theta ):=\int_{M}(R^{2}-|A|_{J,\theta }^{2})\theta \wedge d\theta .
\label{En}
\end{equation}%
\noindent Our main result is as follows.

\begin{theorem}
\label{main-H} On a closed (i.e. compact with no boundary) contact
3-manifold $(M,\xi ),$ suppose that $(J,\theta )$ is a pseudo-Einstein
critical point of (\ref{En}). Then the universal cover $\tilde{M}$ of $M$
with the structure naturally inherited from $(\xi ,J,\theta )$, still
denoted by the same notation, is homogeneous as a pseudohermitian 3-manifold.
\end{theorem}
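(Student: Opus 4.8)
The plan is to make the two variational conditions explicit, to use the pseudo-Einstein equation \eqref{pE-1} together with its covariant derivatives to collapse them to the statement that the local pseudohermitian invariants of $(J,\theta)$ are constant and that the torsion is an eigentensor of the Reeb flow, and then to invoke on the universal cover the fact that a complete simply connected manifold carrying an absolute parallelism with constant structure functions is a Lie group.

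\emph{Step 1: the Euler--Lagrange equations.} A variation in $\theta$ has the form $\dot\theta=2f\theta$, since every contact form for $\xi$ is $e^{2f}\theta$; substituting into \eqref{En} the conformal transformation laws for $R$, for $A_{11}$ and for $\theta\wedge d\theta$ (in terms of $\Delta_b f$, $f_{,11}$, $f_{,1}$) and integrating by parts on the closed manifold, the $\theta$-criticality condition becomes a second-order scalar identity relating $\Delta_b R$ to a contraction of the second covariant derivative of the torsion. A variation in $J$ within $\xi$ is encoded by a deformation tensor $E_{11}$; the linearized formulas give $\dot A_{11}$ of first order in $E_{11}$ (carrying the Reeb derivative $E_{11,0}$) and $\dot R$ of second order in $E_{11}$, while $\theta\wedge d\theta$ is unchanged, so integration by parts produces a second, tensorial identity, schematically $R_{,\bar1\bar1}+c\,i\,A_{\bar1\bar1,0}+(\text{zeroth order in derivatives, linear in }R,A)=0$.

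\emph{Step 2: collapsing the system via pseudo-Einstein---the main obstacle.} Differentiating \eqref{pE-1} and its conjugate and using the pseudohermitian commutation relations, the torsion term in the $\theta$-equation is rewritten in terms of $R$, so that equation reduces to $\Delta_b R=0$. Since $M$ is closed, $\int_M R\,\Delta_b R\,\theta\wedge d\theta=-\int_M|\nabla_b R|^2\,\theta\wedge d\theta$, hence $\nabla_b R\equiv0$; applying the commutator $[\nabla_1,\nabla_{\bar1}]$ to $R$ then forces $\nabla_0 R\equiv0$, so $R$ is constant. Substituting $R_{,1}=0$ into \eqref{pE-1} gives $A_{11,\bar1}=0$. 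With $R$ constant the $J$-equation becomes the first-order equation $\nabla_0 A_{11}=i\mu A_{11}$ for a real constant $\mu$, and the contracted Bianchi identities in dimension $3$ then pin down the remaining first derivative $\nabla_1 A_{11}$; in particular $|A|_{J,\theta}^{2}$ is constant. The delicate point---where I expect the main difficulty---is exactly this bookkeeping: isolating the precise Euler--Lagrange equations, carrying out the index computation that inserts \eqref{pE-1} into them, and upgrading the resulting integral identities on the closed manifold to the pointwise statements above.

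\emph{Step 3: conclusion on the universal cover.} Lift $(\xi,J,\theta)$ to the simply connected cover $\tilde M$, which inherits the same constant invariants. The conditions from Step 2---$R$ constant, $A_{11,\bar1}=0$, $\nabla_0 A_{11}=i\mu A_{11}$, $\nabla_1 A_{11}$ Bianchi-determined---are precisely those under which, after rotating an admissible $(1,0)$-coframe by $\theta^1\mapsto e^{i\mu t/2}\theta^1$ along the Reeb flow (parameter $t$), all the structure functions of $\tilde M$ (the scalar $R$, the torsion coefficient, and the resulting connection coefficients) are constant in the coframe. Since $\tilde M$ is complete and simply connected, the resulting absolute parallelism with constant structure functions makes $\tilde M$ the simply connected $3$-dimensional Lie group carrying that left-invariant pseudohermitian structure (for instance $S^3$, the Heisenberg group, or $\widetilde{SL(2,\mathbb{R})}$ according to the sign of $R$), on which left translations act by pseudohermitian automorphisms. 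Hence $\tilde M$ is homogeneous, which is the assertion; in particular this accounts for the homogeneity of the Rossi spheres.
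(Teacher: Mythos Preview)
Your outline tracks the paper through Step~1 and the first half of Step~2: the Euler--Lagrange system, then the pseudo-Einstein relation giving $\Delta_bR=0$, hence $R$ constant, hence $A_{11,\bar1}=0$, and from the $J$-equation (with $R$ constant) the Reeb relation $A_{11,0}=2iRA_{11}$. The gap is the next sentence. The assertion that ``the contracted Bianchi identities in dimension~3 then pin down $\nabla_1A_{11}$'' and force $|A|^2$ constant is not correct: no Bianchi identity in three-dimensional pseudohermitian geometry constrains $A_{11,1}$ once $R_{,1}=0$ and $A_{11,\bar1}=0$, and one computes $(|A|^2)_{,1}=A_{11,1}A_{\bar1\bar1}$, which is undetermined by what you have. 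The paper obtains $A_{11,1}=0$ by a different, global mechanism: multiply the $\theta$-Euler--Lagrange equation by $R$, integrate by parts, and substitute the $J$-equation in the form $R_{,11}=\tfrac{i}{2}(A_{11,1\bar1}-A_{11,\bar11})$ to reach the integral identity
\[
\int_M\bigl(-4|\nabla_bR|^2+|A_{11,1}|^2-|A_{11,\bar1}|^2\bigr)\,\theta\wedge d\theta=0,
\]
whereupon $\nabla_bR=0$ and $A_{11,\bar1}=0$ force $A_{11,1}\equiv0$. Without this your Step~3 cannot start: the coframe rotation $e^{i\mu t/2}$ along the Reeb flow absorbs only the $T$-variation of $A_{11}$, not a horizontal one, so you will not get constant structure functions.

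Given $A_{11,1}=0$ as well, your Step~3 is a genuinely different and legitimate route. You produce on the simply connected cover a global coframe with constant structure constants and identify $\tilde M$ with a three-dimensional Lie group on which left translations are pseudohermitian automorphisms. The paper instead proves $\nabla_XR=0$ and $\nabla_X\tau=0$ for all contact $X$, invokes Falbel--Gorodski to build a local sub-symmetry at each point, lifts and extends it to a global automorphism of $\tilde M$ via the parabolic exponential map (in the spirit of Kobayashi--Nomizu), and then shows transitivity by reflecting through midpoints of Legendrian geodesics (using Strichartz's completeness and existence results). Your absolute-parallelism argument is shorter and more direct in this dimension; the paper's sub-symmetric-space approach hews to the Riemannian symmetric-space template and makes the use of completeness more explicit.
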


In the Riemannian case, a variational characterization of space forms was
given in \cite{GV}. It is well known that homogeneous CR 3-manifolds have
been classified by Cartan (\cite{Cartan}; see also \cite{BJ}). In
particular, when $(M,\xi )$ $=$ $(S^{3},\hat{\xi}),$ we have the following
characterization for Rossi spheres.

\begin{corollary}
\label{main} On the standard contact 3-sphere $(S^{3},\hat{\xi}),$ it holds
that $(J,\theta )$ is a pseudo-Einstein critical point of (\ref{En}) if and
only if $J$ is isomorphic to a Rossi sphere $J_{(s)}$ for some unique $s$ $%
\leq $ $0$ and $\theta $ is a constant multiple of $\hat{\theta}$.
\end{corollary}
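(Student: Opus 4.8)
The proof splits into the two implications; the forward one rests on Theorem \ref{main-H} combined with Cartan's classification of homogeneous CR $3$-manifolds, and the converse is essentially a substitution into the Euler--Lagrange equations of $E$. Throughout one uses the explicit description of the Rossi spheres in Subsection \ref{Subsection1-1}.

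\textbf{Necessity.} Suppose $(J,\theta )$ is a pseudo-Einstein critical point of $E$ on $(S^{3},\hat{\xi })$. Since $S^{3}$ is simply connected, the universal cover in Theorem \ref{main-H} is $S^{3}$ itself, so $(S^{3},\hat{\xi },J,\theta )$ is a homogeneous pseudohermitian $3$-manifold; in particular $(S^{3},\hat{\xi },J)$ is a homogeneous CR $3$-manifold. First I would invoke Cartan's classification (\cite{Cartan}; see also \cite{BJ}) to conclude that $J$ is CR-isomorphic to one of the structures on Cartan's list, and that the members realized on the compact model $(S^{3},\hat{\xi })$ are exactly the Rossi spheres $\{J_{(s)}\}$ of Subsection \ref{Subsection1-1}, with $J_{(0)}$ the standard structure. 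Using the CR-equivalences $J_{(s)}\cong J_{(-s)}$ and the fact, recalled in Subsection \ref{Subsection1-1}, that distinct $s\le 0$ give pairwise non-isomorphic structures, one gets a unique $s\le 0$ with $J\cong J_{(s)}$. Transporting the structure by this isomorphism we may assume $J=J_{(s)}$; then $\theta =u\,\hat{\theta }$ for a smooth $u>0$ (positivity of $d\theta (\cdot ,J\cdot )$), and homogeneity means that the pseudohermitian automorphism group $G$ of $(J_{(s)},u\hat{\theta })$ acts transitively on $S^{3}$. Now $G\subset \mathrm{Aut}_{CR}(S^{3},J_{(s)})$, which for $s\neq 0$ is a compact group acting transitively and admitting $\hat{\theta }$ (up to a positive scalar) as its unique invariant contact form; hence every $\phi \in G$ preserves $\hat{\theta }$, so $u\circ \phi =u$, and transitivity of $G$ forces $u$ to be constant. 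For $s=0$ one reaches the same conclusion after conjugating the relevant maximal compact subgroup of the CR automorphism group into standard position. Thus $\theta $ is a constant multiple of $\hat{\theta }$.

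\textbf{Sufficiency.} Conversely, let $J=J_{(s)}$, $s\le 0$, and $\theta =c\,\hat{\theta }$, $c>0$. From the description in Subsection \ref{Subsection1-1} (see also \cite{CMY}) the structure $(J_{(s)},\hat{\theta })$ is homogeneous, so its pseudohermitian invariants $R$ and $|A|_{J,\theta }^{2}$ are constants; in particular $R_{,1}=0$, and one checks $A_{11,\bar{1}}=0$, so the pseudo-Einstein equation (\ref{pE-1}) holds (rescaling $\hat{\theta }$ by $c$ preserves all of this). It then remains to verify that $(J_{(s)},c\hat{\theta })$ satisfies the Euler--Lagrange equations characterizing critical points of $E$ that are used in the proof of Theorem \ref{main-H}: every term of that system is a covariant derivative of $R$ or of $A_{11}$, or a contraction of such with curvature and torsion, and all these vanish for a homogeneous pseudohermitian structure. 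Hence the system is satisfied, $(J_{(s)},c\hat{\theta })$ is a critical point of $E$, and being also pseudo-Einstein it is a pseudo-Einstein critical point.

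The main obstacle is confined to the necessity part: matching Cartan's abstract list with the concrete family $\{J_{(s)}\}$, pinning down exactly which members occur on the compact contact manifold $(S^{3},\hat{\xi })$, and carrying out the automorphism-group analysis that rules out non-constant conformal factors --- most delicately in the exceptional, highly symmetric case $s=0$, where the CR automorphism group is non-compact. The sufficiency direction, once the Euler--Lagrange system is in hand, is purely computational.
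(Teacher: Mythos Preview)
Your approach is essentially the paper's: necessity via Theorem \ref{main-H} plus Cartan's classification (through \cite{BJ}), and sufficiency by checking the Euler--Lagrange system (\ref{EL}) on the Rossi spheres. Two remarks are in order.

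For sufficiency, the assertion that the terms of (\ref{EL}) ``all vanish for a homogeneous pseudohermitian structure'' is not justified as a general principle: homogeneity makes $\nabla A$ an invariant tensor, not a vanishing one. What you actually need is $A_{11,1}=A_{11,\bar 1}=0$ for the Rossi spheres specifically (you state the second but not the first), and this follows from (\ref{BK}) together with $\omega_{1(s)}^{1}(Z_{1(s)})=0$. The paper avoids this by verifying the original form of the first variation directly, computing $A_{\bar 1\bar 1(s),0}=2iR_{(s)}A_{\bar 1\bar 1(s)}$ so that $2iR_{,\bar 1\bar 1}-2RA_{\bar 1\bar 1}+iA_{\bar 1\bar 1,0}=0$.

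For necessity, you are in fact more careful than the paper about why $\theta$ must be a constant multiple of $\hat\theta$: the paper simply asserts that $\theta$ is $SU(2)$-invariant (once $J$ is identified with a left-invariant structure) and concludes, whereas you spell out the automorphism-group argument and correctly flag the $s=0$ case, where $\mathrm{Aut}_{CR}(J_{(0)})$ is noncompact and an extra conjugation into the maximal compact is needed before one can say the transitive pseudohermitian automorphism group contains $SU(2)$.
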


In Section \ref{Sec2}, after a short review of Rossi spheres in Subsection %
\ref{Subsection1-1} and basic variation formulas in Subsection \ref%
{Subsection1-2}, we derive the Euler-Lagrange equation (\ref{EL}) for the
critical points of the energy functional $E(J,\theta ).$ In Section \ref%
{Sec3}, from (\ref{EL}) and $(J,\theta )$ being pseudo-Einstein it follows
that $(M,\xi ,J,\theta )$ (or $(M,J,\theta )$ with $\xi $ omitted) is
locally sub-symmetric through Proposition \ref{P-pE_Sym} and Lemma \ref%
{L-APT}. In Section \ref{Sec3} we first prove Theorem \ref{main-H}. Then for 
$(M,\xi )$ $=$ $(S^{3},\hat{\xi}),$ we conclude Corollary \ref{main} by a
result in \cite{BJ}. We remark that $\iota :$ $(z^{1},z^{2})$ $\rightarrow $ 
$(iz^{1},z^{2})$ is a pseudohermitian isomorphism between $(J_{(s)},$ $\hat{%
\theta})$ and $(J_{(-s)},$ $\hat{\theta})$ \cite{CMY1}. In Section \ref{Sec4}%
, for independent interest we compute the second variation of the energy
functional $E(J,\theta )$ at a critical point $(\hat{J},\hat{\theta}).$

\begin{theorem}
\label{T-3} With the notation above, we obtain the second variation formulas 
$\delta _{J}^{2}E(\hat{J},\hat{\theta}),$ $\delta _{\theta }\delta _{J}E(%
\hat{J},\hat{\theta})$ ($=$ $\delta _{J}\delta _{\theta }E(\hat{J},\hat{%
\theta})$) and $\delta _{\theta }^{2}E(\hat{J},\hat{\theta})$ as in (\ref{S5}%
), (\ref{S7}) and (\ref{S8}) respectively.
\end{theorem}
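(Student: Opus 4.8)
The plan is to obtain all three formulas by differentiating the \emph{first} variation of $E$ one more time, exploiting that at a critical point $(\hat J,\hat\theta)$ the Hessian is a genuine bilinear form: along any smooth two‑parameter deformation with prescribed velocities, the contributions of the accelerations $\ddot J,\ddot\theta$ have the form $\delta_J E(\hat J,\hat\theta)[\ddot J]$ and $\delta_\theta E(\hat J,\hat\theta)[\ddot\theta]$, hence vanish by the Euler--Lagrange system (\ref{EL}). So each formula is computed as $\tfrac{d^2}{dt^2}\big|_{0}$ (or $\partial_s\partial_t\big|_{0}$) of $E$ along a convenient path. I would parametrize deformations of the CR structure by a symmetric tensor, $\dot J=:2E_{11}$ anticommuting with $\hat J$ (the infinitesimal CR deformation, in the standard complex notation), holding $\xi$ and $\theta$ fixed, and conformal deformations of the contact form by $\theta_t=e^{2tf}\hat\theta$ with $f$ real, holding $\xi$ and $J$ fixed. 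The required first‑order variation formulas for $R$, $A_{11}$, $|A|^2_{J,\theta}$ and the volume form $\theta\wedge d\theta$ under each type of deformation are those of Subsection \ref{Subsection1-2}, together with Lee's conformal transformation laws.

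For $\delta^2_J E(\hat J,\hat\theta)$, I would take any path $J_t$ with $\dot J=2E_{11}$ and differentiate $\int_M(R_t^2-|A_t|^2_{J_t,\hat\theta})\,\hat\theta\wedge d\hat\theta$ twice; here the volume form is inert, so everything reduces to the first and second variations of the Tanaka--Webster connection forms along a deformation of $J$, hence of $R$ and $A_{11}$. Expanding and then organizing by repeated integration by parts (divergence theorem with respect to $\hat\theta\wedge d\hat\theta$), the pseudohermitian commutation identities for $\nabla$ in dimension $3$, and, where it simplifies, the pseudo‑Einstein relation (\ref{pE-1}) (equivalently (\ref{pE})) to trade $R_{,1}$ for $iA_{11,\bar 1}$, one arrives at a manifestly quadratic expression in $E_{11}$ and its covariant derivatives, which is (\ref{S5}).

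For the mixed variation I would use $\delta_\theta\delta_J E(\hat J,\hat\theta)[E_{11},f]=\tfrac{d}{ds}\big|_{0}\,\delta_J E(\hat J,e^{2sf}\hat\theta)[E_{11}]=\partial_s\partial_t\big|_{0}E(J_t,e^{2sf}\hat\theta)$, so no acceleration terms appear and the symmetry $\delta_\theta\delta_J E=\delta_J\delta_\theta E$ is automatic and serves as a consistency check; the new inputs are the conformal laws for the connection, for $R$ (the CR Yamabe/Lee formula, linear in $\Delta_b f$ to first order) and for $A_{11}$ (which acquires a term in $f_{,11}$), and after integration by parts this gives (\ref{S7}). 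For $\delta^2_\theta E(\hat J,\hat\theta)$ I would work directly along $\theta_t=e^{2tf}\hat\theta$ from the transformation laws $\hat\theta\wedge d\hat\theta\mapsto e^{4tf}\hat\theta\wedge d\hat\theta$, $R_t=e^{-2tf}\big(R-4t\,\Delta_b f+\cdots\big)$ and the corresponding torsion law, expand $R_t^2-|A_t|^2$ together with the volume factor to second order in $t$, use (\ref{EL}) to cancel the genuinely linear contributions, and integrate by parts to obtain (\ref{S8}).

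The main obstacle is the $J$--$J$ piece: the \emph{second} variation of $R$ and $A_{11}$ requires differentiating the Webster connection twice, which generates a proliferation of curvature and torsion terms through the structure equations, and reducing the outcome to a closed, symmetric quadratic form demands a long chain of integrations by parts with careful bookkeeping of the non‑tensorial part of $\dot\Gamma$ and of the index contractions; this is where errors are most likely. By comparison the $\theta$--$\theta$ and mixed variations are essentially routine once the conformal transformation formulas are in hand, modulo the same integration‑by‑parts reductions.
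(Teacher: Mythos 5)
Your overall strategy -- differentiate the first-variation formulas once more at the critical point and discard the acceleration terms via (\ref{EL}) -- is exactly how the paper proceeds, and your treatment of the mixed and $\theta$--$\theta$ pieces would indeed reproduce (\ref{S7}) and (\ref{S8}) (the paper applies $\delta _{\theta }$ to (\ref{1stV}) and (\ref{1stV2}) using (\ref{B7}), (\ref{B8}), (\ref{B9}), (\ref{B11})). However, for the $J$--$J$ piece there is a concrete missing ingredient: formula (\ref{S5}) is \emph{not} the second variation for an arbitrary deformation tensor $E_{11}$. It is obtained from the general expression (\ref{S2}) only after imposing the Cheng--Lee slice (gauge-fixing) condition $B_{\hat{J}}E=0$, i.e.\ (\ref{S3}): $iE_{11,\bar{1}\bar{1}}-\hat{A}_{11}E_{\bar{1}\bar{1}}=-iE_{\bar{1}\bar{1},11}-\hat{A}_{\bar{1}\bar{1}}E_{11}$, which removes the part of $\delta J$ generated by contact diffeomorphisms. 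Your plan places no constraint on $E_{11}$, so the ``manifestly quadratic expression'' you would reach is (a rearrangement of) (\ref{S2}), not (\ref{S5}); without invoking the slice condition you cannot match the stated formula, and this gauge freedom is the one idea your outline does not contain.

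Two further points. First, your route through \emph{second} variations of $R$ and $A_{11}$ (hence of the Tanaka--Webster connection), which you yourself flag as the main obstacle, is avoidable and the paper avoids it: since the Euler--Lagrange density in (\ref{1stV}) vanishes at $(\hat{J},\hat{\theta})$, one only needs the first variations $\delta _{J}R$, $\delta _{J}A_{11}$ and the first variation $\delta _{J}\omega _{1}^{1}$ of the connection, as in (\ref{B1}), (\ref{B3}), (\ref{B4}); no second variation of the connection ever appears. Second, the simple forms of (\ref{B3}), (\ref{B9}), (\ref{B11}) -- and hence of (\ref{S5}), (\ref{S7}), (\ref{S8}) -- use that at a pseudo-Einstein critical point $\hat{R}$ is constant and $\hat{A}_{11,1}=\hat{A}_{11,\bar{1}}=0$ (Proposition \ref{P-pE_Sym}); you mention the pseudo-Einstein relation only as an optional simplification, whereas these facts are needed, not optional, to arrive at the formulas as stated.
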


We remark that there is no characterization in general on sign of the second
variation. In fact, using for examples the Fourier decompositions for the
variations of $J$ and $\theta $ as in \cite{ACMY}, it is possible to find
deformations along which the second differential is either positive or
negative at the standard $S^{3}.$ See (\ref{A8}) and (\ref{A12}) for
examples in the Appendix.

\bigskip

{\large Acknowledgments. }J.-H. C. would like to thank the Ministry of
Science and Technology of Taiwan for the support: grant no. MOST
110-2115-M-001-015 and the National Center for Theoretical Sciences for the
constant support. A.M. is supported by the project \emph{Geometric Problems
with Loss of Compactness} from Scuola Normale Superiore. He is also a member
of GNAMPA as part of INdAM. P. Y. acknowledges support from the NSF for the
grant DMS 1509505.

\section{Rossi spheres, variations and energy functional\label{Sec2}}

\subsection{Rossi spheres\label{Subsection1-1}}

In this subsection we give a short introduction to Rossi spheres. See \cite[%
Section 2.2]{CMY1} for more details. The standard contact form $\hat{\theta}$
on $S^{3}$ $:=$ $\{(z^{1},z^{2})$ $\in $ $C^{2}\text{,}$ $%
|z^{1}|^{2}+|z^{2}|^{2}$ $=$ $1\}$ reads as%
\begin{eqnarray}
\hat{\theta} &=&i(\bar{\partial}-\partial )(|z^{1}|^{2}+|z^{2}|^{2})
\label{Rz} \\
&=&i\sum_{k=1}^{2}(z^{k}dz^{\bar{k}}-z^{\bar{k}}dz^{k}).  \notag
\end{eqnarray}

\noindent Note that $\hat{\theta}$ is $SU(2)$-invariant, where $SU(2)$ acts
on $\mathbb{C}^{2}$ in the canonical way. Dual to $\theta ^{1}$ $=$ $%
z^{2}dz^{1}$ $-$ $z^{1}dz^{2}\text{,}$ we have%
\begin{equation*}
Z_{1}=z^{\bar{2}}\dfrac{\partial }{\partial z^{1}}-z^{\bar{1}}\dfrac{%
\partial }{\partial z^{2}}\text{.}
\end{equation*}

\noindent Note that $Z_{1}$ (resp. $Z_{\bar{1}})$ is also $SU(2)$-invariant.

Consider the deformation of CR structures described by giving type $(0,1)$
vectors as follows: 
\begin{equation*}
Z_{\bar{1}(s)}=Z_{\bar{1}}+\frac{iE_{\bar{1}\bar{1}}s}{\sqrt{1+|E_{11}s|^{2}}%
}Z_{1}
\end{equation*}

\noindent where $E_{\bar{1}\bar{1}}$ is a deformation tensor associated to
the CR structure $J$ (cf. Subsection \ref{Subsection1-2}). Note that we use
the notation $E_{\bar{1}\bar{1}}$ instead of $E_{\bar{1}}$$^{1}$ for
convenience/simplicity; for a unitary frame/coframe they are equal. The
derivative of $Z_{\bar{1}(s)}$ in $s$ reads as%
\begin{equation}
\dot{Z}_{1(s)}=\frac{-iE_{11}Z_{\bar{1}}}{(1+|E_{11}|^{2}s^{2})^{3/2}}\text{.%
}  \label{R0}
\end{equation}

\noindent We express $Z_{1}$ and $Z_{\bar{1}}$ in terms of $Z_{1(s)}$ and $%
Z_{\bar{1}(s)}$ as follows:%
\begin{eqnarray}
Z_{1} &=&iE_{11}s\sqrt{1+|E_{11}s|^{2}}Z_{\bar{1}%
(s)}+(1+|E_{11}s|^{2})Z_{1(s)}\text{,}  \label{R1} \\
Z_{\bar{1}} &=&(-i)E_{\bar{1}\bar{1}}s\sqrt{1+|E_{11}s|^{2}}%
Z_{1(s)}+(1+|E_{11}s|^{2})Z_{\bar{1}(s)}\text{.}  \notag
\end{eqnarray}

\noindent Substituting the second equality of (\ref{R1}) into (\ref{R0})
gives%
\begin{equation}
\dot{Z}_{1(s)}=\frac{-|E_{11}|^{2}s\sqrt{1+|E_{11}s|^{2}}%
Z_{1(s)}-iE_{11}(1+|E_{11}s|^{2})Z_{\bar{1}(s)}}{(1+|E_{11}|^{2}s^{2})^{3/2}}%
.  \label{R3}
\end{equation}

\noindent Differentiating $J_{(s)}Z_{1(s)}=iZ_{1(s)}$ with respect to $s$ we
get%
\begin{equation}
\dot{J}_{(s)}Z_{1(s)}+J_{(s)}\dot{Z}_{1(s)}=i\dot{Z}_{1(s)}.  \label{R4}
\end{equation}

\noindent Substituting (\ref{R3}) into (\ref{R4}) and writing $\dot{J}%
_{(s)}=2E_{11}^{(s)}\theta _{(s)}^{1}\otimes Z_{\bar{1}(s)}+$ conjugate, we
obtain 
\begin{equation*}
E_{11}^{(s)}=\frac{E_{11}}{\sqrt{1+|E_{11}s|^{2}}}\text{.}
\end{equation*}

\noindent Therefore we have 
\begin{equation*}
\dot{E}_{11}^{(s)}=-E_{11}|E_{11}|^{2}s(1+|E_{11}|^{2}s^{2})^{-3/2}.
\end{equation*}

For Rossi spheres, we take 
\begin{equation*}
E_{11}=i\text{.}
\end{equation*}%
\noindent Observe that%
\begin{eqnarray}
Z_{1(s)} &=&Z_{1}+\dfrac{s}{\sqrt{1+s^{2}}}Z_{\bar{1}},  \label{R6-1} \\
Z_{\bar{1}(s)} &=&Z_{\bar{1}}+\dfrac{s}{\sqrt{1+s^{2}}}Z_{1}  \notag
\end{eqnarray}%
\noindent are $SU(2)$-invariant since both $Z_{1}$ and $Z_{\bar{1}}$ are $%
SU(2)$-invariant. Dual to (\ref{R6-1}) we have%
\begin{eqnarray}
\theta _{(s)}^{1} &=&(1+s^{2})\theta ^{1}-s\sqrt{1+s^{2}}\theta ^{\bar{1}}%
\text{,}  \label{R6-2} \\
\theta _{(s)}^{\bar{1}} &=&(1+s^{2})\theta ^{\bar{1}}-s\sqrt{1+s^{2}}\theta
^{1}\text{.}  \notag
\end{eqnarray}

\noindent Compute%
\begin{equation}
i\theta _{(s)}^{1}\wedge \theta _{(s)}^{\bar{1}}=(1+s^{2})i\theta ^{1}\wedge
\theta ^{\bar{1}}=(1+s^{2})d\theta ,  \label{R7}
\end{equation}

\noindent where $d\theta $ $=$ $i\theta ^{1}\wedge \theta ^{\bar{1}}\text{,}$
i.e., $h_{1\bar{1}}$ $=$ $1.$ So from (\ref{R7}) it follows that 
\begin{equation}
h_{1\bar{1}}^{(s)}=\frac{1}{1+s^{2}}\quad \text{ and }\quad h_{(s)}^{1\bar{1}%
}:=(h_{1\bar{1}}^{(s)})^{-1}=1+s^{2}\text{.}  \label{h11}
\end{equation}

\noindent Suppose that the Webster curvature $R$ of $(J,\theta )$ is a
positive constant $R_{0}\text{ (see \cite{Lee} and \cite{Lee2} for basic
pseudohermitian geometry).}$ Then we should take $\omega _{1}^{1}$ $=$ $%
-iR_{0}\theta $ in the structure equation, so that $d\omega _{1}^{1}$ $=$ $%
R_{0}\theta ^{1}\wedge \theta ^{\bar{1}}\text{.}$ For $\theta $ $=$ $\hat{%
\theta}$ (the standard contact form on $S^{3}\text{,}$ see (\ref{Rz})), $%
R_{0}$ $=$ $1$ while $R$ $=$ $R_{0}$ for $\theta $ $=$ $\hat{\theta}/R_{0}%
\text{.}$ Hence 
\begin{equation}
\omega _{1}^{1}=-iR_{0}\theta =-i\hat{\theta}=-2(z^{\bar{1}}dz^{1}+z^{\bar{2}%
}dz^{2}),  \label{w11}
\end{equation}
\noindent where we have used that $z^{\bar{1}}dz^{1}$ $+$ $z^{\bar{2}}dz^{2}$
$+$ conjugate $=$ $0$ on $S^{3}\text{.}$ We can then determine, from the
structure equation for $(J_{(s)},\theta )\text{,}$ that%
\begin{eqnarray}
\omega _{1(s)}^{1} &=&(-i)(1+2s^{2})R_{0}\theta ,  \label{BK} \\
h_{(s)}^{1\bar{1}}A_{\bar{1}\bar{1}(s)} &=&2is\sqrt{1+s^{2}}R_{0},  \notag \\
R_{(s)} &=&(1+2s^{2})R_{0}.  \notag
\end{eqnarray}

\noindent It follows that $(J_{(s)},\theta )$ is pseudo-Einstein (see (\ref%
{pE})) since $R_{(s),1(s)}$ $=$ $0$ $=$ $A_{11(s),\bar{1}(s)}.$ For a
pseudohermitian structure $(J,\theta )$ we recall that the sublaplacian $%
\Delta _{b}$ acting on a function $u$ reads as%
\begin{eqnarray*}
\Delta _{b}u &=&h^{1\bar{1}}(u_{,1\bar{1}}+u_{,\bar{1}1}) \\
&=&u_{,1\bar{1}}+u_{,\bar{1}1}\text{ for a unitary frame (so }h^{1\bar{1}%
}=1).
\end{eqnarray*}

\subsection{Basic formulas for variations in $J$ and $\protect\theta \label%
{Subsection1-2}$}

In this subsection we provide basic formulas for variations of a
pseudohermitian manifold $(M,J,\theta )$. Write the variation of $J$ as $%
\delta J=2E,$ $E=E_{11}\theta ^{1}\otimes Z_{\bar{1}}+$conjugate (for a
unitary coframe/frame $\theta ^{1}/Z_{1}$), $\delta \theta =2h\theta .$
Recall (\cite{CL}, \cite{CMY} or \cite{ACMY}) that 
\begin{eqnarray}
\delta _{J}R &=&(iE_{11,\bar{1}\bar{1}}-A_{\bar{1}\bar{1}}E_{11})+\text{%
conjugate,}  \label{B1} \\
\delta _{J}A_{11} &=&iE_{11,0,}  \notag \\
\delta _{\theta }R &=&-2hR-4\triangle _{b}h,  \notag \\
\delta _{\theta }A_{11} &=&-2hA_{11}+2ih_{,11}.  \notag
\end{eqnarray}

\noindent Define the energy functional $E(J,\theta )$ for a pseudohermitian
manifold $(M,J,\theta )$ by%
\begin{equation*}
E(J,\theta ):=\int_{M}(R^{2}-|A|_{J,\theta }^{2})\theta \wedge d\theta
\end{equation*}

\noindent (cf. (\ref{En})). We have the following first variation formulas
for $E(J,\theta ).$

\begin{proposition}
\label{P-1stV} Suppose $(M,J,\theta )$ is a closed (compact with no
boundary) pseudohermitian $3$-manifold. Then we have, for a unitary frame $%
Z_{1}$ (and coframe $\theta ^{1}$)%
\begin{equation}
\delta _{J}E(J,\theta )=\int_{M}(2iR_{,\bar{1}\bar{1}}-2RA_{\bar{1}\bar{1}%
}+iA_{\bar{1}\bar{1},0})E_{11}+\QTR{up}{conjugate},  \label{1stV}
\end{equation}%
\begin{equation}
\delta _{\theta }E(J,\theta )=\int_{M}\{-8\Delta _{b}R-2i(A_{\bar{1}\bar{1}%
,11}-A_{11,\bar{1}\bar{1}})\}h\theta \wedge d\theta ,  \label{1stV2}
\end{equation}%
where $\delta J=2E,$ $E=E_{11}\theta ^{1}\otimes Z_{\bar{1}}+\QTR{up}{%
conjugate}$ and $\delta \theta =2h\theta .$ So, the Euler-Lagrange equation
for $(J,\theta )$ reads as%
\begin{equation}
\QATOPD\{ . {R_{,11}-\frac{i}{2}(A_{11,1\bar{1}}-A_{11,\bar{1}1})=0\text{,}%
}{-4\Delta _{b}R-i(A_{\bar{1}\bar{1},11}-A_{11,\bar{1}\bar{1}})=0.}
\label{EL}
\end{equation}
\end{proposition}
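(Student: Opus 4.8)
The strategy is a direct computation of the first variation of $E(J,\theta) = \int_M (R^2 - |A|^2)\,\theta\wedge d\theta$ in the two independent directions $\delta J = 2E$ and $\delta\theta = 2h\theta$, feeding in the basic variation formulas \eqref{B1}. For the variation in $J$: since the volume form $\theta\wedge d\theta$ depends only on $\theta$, we have $\delta_J E = \int_M (2R\,\delta_J R - \delta_J|A|^2)\,\theta\wedge d\theta$, where for a unitary frame $|A|^2 = A_{11}A_{\bar 1\bar 1}$ so $\delta_J|A|^2 = (\delta_J A_{11})A_{\bar 1\bar 1} + A_{11}(\delta_J A_{\bar 1\bar 1})$. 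Substituting $\delta_J R = iE_{11,\bar 1\bar 1} - A_{\bar 1\bar 1}E_{11} + \text{conj.}$ and $\delta_J A_{11} = iE_{11,0}$ from \eqref{B1}, the resulting integrand is then manipulated by integration by parts (using the divergence theorem for the Tanaka--Webster connection on the closed manifold $M$, which kills all total-divergence terms) to move all derivatives off $E_{11}$ and onto $R$ and $A$. The term $2R\cdot iE_{11,\bar 1\bar 1}$ integrates by parts twice to give $2iR_{,\bar 1\bar 1}E_{11}$; the term $-A_{\bar 1\bar 1}\cdot iE_{11,0}$ integrates by parts once in the $T$-direction to give $+iA_{\bar 1\bar 1,0}E_{11}$; the algebraic term $-2RA_{\bar 1\bar 1}E_{11}$ passes through unchanged. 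Collecting these yields \eqref{1stV}.

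For the variation in $\theta$: here both the integrand and the volume form vary. Using $\delta_\theta(\theta\wedge d\theta) = 4h\,\theta\wedge d\theta$ (since $\delta\theta = 2h\theta$ scales $d\theta$ correspondingly), together with $\delta_\theta R = -2hR - 4\Delta_b h$ and $\delta_\theta A_{11} = -2hA_{11} + 2ih_{,11}$ from \eqref{B1}, one computes $\delta_\theta E = \int_M \big[2R(-2hR - 4\Delta_b h) - (\delta_\theta A_{11})A_{\bar 1\bar 1} - A_{11}(\delta_\theta A_{\bar 1\bar 1}) + 4h(R^2 - |A|^2)\big]\theta\wedge d\theta$. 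The $R^2$ terms combine: $-4hR^2 + 4hR^2 = 0$; the $|A|^2$ terms combine similarly: the $-2h$ pieces of $\delta_\theta A$ give $-4h|A|^2$, cancelling the $+4h\cdot(-|A|^2)$... up to sign bookkeeping, leaving $-8R\Delta_b h$ from the curvature term and $-2i(h_{,11}A_{\bar 1\bar 1} - h_{,\bar 1\bar 1}A_{11})$ from the torsion term. Integration by parts (twice, against the self-adjointness of $\Delta_b$ and the divergence structure of the second covariant derivatives) moves all derivatives off $h$: $\int -8R\Delta_b h = \int -8(\Delta_b R)h$ and $\int -2i(h_{,11}A_{\bar 1\bar 1} - h_{,\bar 1\bar 1}A_{11}) = \int -2i(A_{\bar 1\bar 1,11} - A_{11,\bar 1\bar 1})h$, giving \eqref{1stV2}.

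Finally, the Euler--Lagrange system \eqref{EL} is read off by setting the two variations to zero for all test tensors $E_{11}$ and all test functions $h$. Vanishing of \eqref{1stV2} for all $h$ immediately gives the second equation $-4\Delta_b R - i(A_{\bar 1\bar 1,11} - A_{11,\bar 1\bar 1}) = 0$. Vanishing of \eqref{1stV} for all $E_{11}$ gives $2iR_{,\bar 1\bar 1} - 2RA_{\bar 1\bar 1} + iA_{\bar 1\bar 1,0} = 0$; taking the conjugate and using the commutation relation for covariant derivatives $A_{11,0} = \text{(terms with } A_{11,1\bar 1} - A_{11,\bar 1 1})$ — specifically the Bianchi-type identity that expresses $A_{11,0}$ in terms of the commutator of the two mixed second derivatives of $A_{11}$ — converts $iA_{11,0}$ into $-\tfrac{i}{2}\cdot 2(A_{11,1\bar 1} - A_{11,\bar 1 1})$ style form, and together with the identity $R_{,11}$ appearing after using pseudo-Einstein-independent commutation, recasts the first EL equation as $R_{,11} - \tfrac{i}{2}(A_{11,1\bar 1} - A_{11,\bar 1 1}) = 0$. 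The main obstacle is precisely this last rewriting: correctly invoking the pseudohermitian commutation identities relating $A_{11,0}$, $R_{,11}$ and the mixed derivatives $A_{11,1\bar 1}$, $A_{11,\bar 1 1}$ (these are the standard structure-equation/Bianchi identities in dimension $3$, e.g. from \cite{Lee}), and tracking all factors of $2$ and $i$ through conjugation without error. Everything else is routine integration by parts on a closed manifold.
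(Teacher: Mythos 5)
Your plan matches the paper's own proof essentially verbatim: both compute $\delta_J E$ and $\delta_\theta E$ directly from the variation formulas (\ref{B1}) plus integration by parts on the closed manifold, and both obtain the first Euler--Lagrange equation by conjugating (\ref{1stV}) and invoking the commutation relation $iA_{11,0}+2RA_{11}=A_{11,1\bar 1}-A_{11,\bar 1 1}$ (the paper cites \cite{Lee2} for it). The only blemish is the garbled sign bookkeeping in the $|A|^2$ cancellation for $\delta_\theta E$ (the $-2h$ pieces enter $\delta_\theta(R^2-|A|^2)$ with a plus sign and cancel the $-4h|A|^2$ from the volume variation), but the stated outcome (\ref{1stV2}) is correct.
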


\begin{proof}
Making use of the first two formulas in (\ref{B1}) and the integration by
parts, we get (\ref{1stV}) while using the last two formulas in (\ref{B1})
and integrating by parts gives (\ref{1stV2}). Observe that $-iRA_{11}+\frac{1%
}{2}A_{11,0}$ equals $-\frac{i}{2}(A_{11,1\bar{1}}-A_{11,\bar{1}1})$ by the
commutation relation $iA_{11,0}$ + $2RA_{11}$ $=$ $A_{11,1\bar{1}}-A_{11,%
\bar{1}1}$ (\cite{Lee2})$.$ Together with (\ref{1stV}) and (\ref{1stV2}), we
conclude the proof of (\ref{EL}).
\end{proof}

A direct computation shows that each Rossi sphere $(S^{3},J_{(s)},\theta )$
is a solution to (\ref{EL}) by (\ref{BK}) and noting that $A_{11,0}$ $=$ $%
TA_{11}-2\omega _{1}^{1}(T)A_{11}.$ We notice that the coefficient of $%
|A|_{J,\theta }^{2}$ in the integrand of $E(J,\theta )$ is different from
that in the integrand of the following energy functional 
\begin{equation*}
\int (R^{2}-4|A|_{J,\theta }^{2})\theta \wedge d\theta ,
\end{equation*}

\noindent which is known to be the total $Q^{\prime }$-curvature (for
pseudo-Einstein ($J,\theta )),$ whose critical points are spherical, see 
\cite{CY}.

In order to compute the second variation, we need the formulas for $\delta
_{J}R_{,\bar{1}\bar{1}}=\overline{\delta _{J}R_{,11}},$ $\delta _{J}A_{\bar{1%
}\bar{1},0}$ $=$ $\overline{\delta _{J}A_{11,0}},$ $\delta _{\theta }R_{,11}$%
, $\delta _{\theta }A_{11,0},$ $\delta _{\theta }(\Delta _{b}R)$ and $\delta
_{\theta }A_{\bar{1}\bar{1},11}.$ We compute these quantities at a
pseudo-Einstein critical point $(\hat{J},\hat{\theta})$ of $E(J,\theta ),$
where $\hat{R}$ $=$ const and $\hat{A}_{11,1}$ $=$ $\hat{A}_{11,\bar{1}}$ $=$
$0.$ First we obtain

\begin{eqnarray}
\delta _{J}R_{,11} &=&(\delta _{J}R)_{,11}\text{ at }(\hat{J},\hat{\theta})
\label{B3} \\
&&\overset{(\ref{B1})}{=}[iE_{11,\bar{1}\bar{1}}-\hat{A}_{\bar{1}\bar{1}%
}E_{11}-iE_{\bar{1}\bar{1},11}-\hat{A}_{11}E_{\bar{1}\bar{1}}]_{,11}.  \notag
\end{eqnarray}

\noindent Recall that $A_{\bar{1}\bar{1},0}$ $=$ $TA_{11}-2\omega
_{1}^{1}(T)A_{11}.$ So, using $\delta _{J}\omega _{1}^{1}$ $=$ $i(A_{11}E_{%
\bar{1}\bar{1}}+A_{\bar{1}\bar{1}}E_{11})\theta $ $-$ $i(E_{11,\bar{1}%
}\theta ^{1}+E_{\bar{1}\bar{1},1}\theta ^{\bar{1}})$ we have%
\begin{eqnarray}
\delta _{J}A_{11,0} &=&(\delta _{J}A_{11})_{,0}-2(\delta \omega
_{1}^{1})(T)A_{11}  \label{B4} \\
&=&iE_{11,00}-2i(A_{11}E_{\bar{1}\bar{1}}+A_{\bar{1}\bar{1}}E_{11})A_{11}. 
\notag
\end{eqnarray}

For variations in $\theta $ we have the following basic formulas:%
\begin{eqnarray}
\delta \theta &=&2h\theta ,  \label{B5} \\
\delta _{\theta }Z_{1} &=&-hZ_{1,}  \notag \\
\delta _{\theta }T &=&-2hT+2ih_{,1}Z_{\bar{1}}-2ih_{,\bar{1}}Z_{1}.  \notag
\end{eqnarray}

\noindent Besides $\delta _{\theta }R,$ $\delta _{\theta }A_{11}$ in (\ref%
{B1}) we also have%
\begin{equation}
\delta _{\theta }\omega _{1}^{1}=3h_{,1}\theta ^{1}-3h_{,\bar{1}}\theta ^{%
\bar{1}}+i(\triangle _{b}h)\theta .  \label{B6}
\end{equation}

\noindent Writing $R_{,11}$ $=$ $Z_{1}(Z_{1}R)-$ $\omega
_{1}^{1}(Z_{1})Z_{1}R,$ we compute $\delta _{\theta }(R_{,11})$ as follows:%
\begin{eqnarray}
\delta _{\theta }(R_{,11}) &=&\delta _{\theta }Z_{1}(Z_{1}R)+Z_{1}(\delta
_{\theta }Z_{1})R+Z_{1}Z_{1}(\delta _{\theta }R)  \label{B7} \\
&&-(\delta _{\theta }\omega _{1}^{1})(Z_{1})Z_{1}R-\omega _{1}^{1}(\delta
_{\theta }Z_{1})Z_{1}R  \notag \\
&&-\omega _{1}^{1}(Z_{1})(\delta _{\theta }Z_{1})R-\omega
_{1}^{1}(Z_{1})Z_{1}(\delta _{\theta }R)  \notag \\
&=&-4hR_{,11}-8h_{,1}R_{,1}-2h_{,11}R-4(\triangle _{b}h)_{,11}  \notag
\end{eqnarray}

\noindent by (\ref{B5}), (\ref{B6}) and (\ref{B1}). Similarly writing $%
A_{11,0}$ $=$ $TA_{11}$ $-$ $2\omega _{1}^{1}(T)A_{11},$ we compute 
\begin{eqnarray}
\delta _{\theta }(A_{11,0}) &=&(\delta _{\theta }T)A_{11}+T(\delta _{\theta
}A_{11})-2(\delta _{\theta }\omega _{1}^{1})(T)A_{11}  \label{B8} \\
&&-2\omega _{1}^{1}(\delta _{\theta }T)A_{11}-2\omega _{1}^{1}(T)\delta
_{\theta }A_{11}  \notag \\
&=&-4hA_{11,0}+2ih_{,1}A_{11,\bar{1}}-2ih_{,\bar{1}}A_{11,1}  \notag \\
&&-2h_{,0}A_{11}-2i(\triangle _{b}h)A_{11}+2ih_{,110}  \notag
\end{eqnarray}

\noindent by (\ref{B5}), (\ref{B6}) and (\ref{B1}) again. For the second
variation in $\theta $ we also need to compute%
\begin{eqnarray}
&&\delta _{\theta }(\Delta _{b}R)  \label{B9} \\
&=&\Delta _{b}(\delta _{\theta }R)\text{ at the critical points where }R=%
\hat{R}=\text{const.}  \notag \\
&=&-2(\Delta _{b}h)\hat{R}-4\Delta _{b}^{2}h.  \notag
\end{eqnarray}

\noindent and using (\ref{B5})$,$ (\ref{B1}) and (\ref{B6}), we compute 
\begin{eqnarray}
\delta _{\theta }A_{\bar{1}\bar{1},1} &=&(\delta _{\theta }Z_{1})A_{\bar{1}%
\bar{1}}+(\delta _{\theta }A_{\bar{1}\bar{1}})_{,1}  \label{B10} \\
&&+2(\delta _{\theta }\omega _{1}^{1})(Z_{1})A_{\bar{1}\bar{1}}+2\omega
_{1}^{1}(\delta _{\theta }Z_{1})A_{\bar{1}\bar{1}}  \notag \\
&=&-hA_{\bar{1}\bar{1},1}-2h_{,1}A_{\bar{1}\bar{1}}-2hA_{\bar{1}\bar{1}%
,1}-2ih_{,\bar{1}\bar{1}1}+6h_{,1}A_{\bar{1}\bar{1}}  \notag \\
&=&-3hA_{\bar{1}\bar{1},1}+4h_{,1}A_{\bar{1}\bar{1}}-2ih_{,\bar{1}\bar{1}1}.
\notag
\end{eqnarray}

\noindent We then compute, at $(\hat{J},\hat{\theta})$ where $\hat{A}_{\bar{1%
}\bar{1},1}=0,$%
\begin{eqnarray}
\delta _{\theta }A_{\bar{1}\bar{1},11} &=&(\delta _{\theta }A_{\bar{1}\bar{1}%
,1})_{,1}  \label{B11} \\
&=&4h_{,11}\hat{A}_{\bar{1}\bar{1}}-2ih_{,\bar{1}\bar{1}11}.  \notag
\end{eqnarray}

\section{Proofs of Theorem \protect\ref{main-H} and Corollary \protect\ref%
{main}\label{Sec3}}

\begin{proposition}
\label{P-pE_Sym} Suppose $(M,\xi )$ is a closed (compact with no boundary)
contact $3$-manifold. Suppose $(J,\theta )$ on $(M,\xi )$ is pseudo-Einstein
and a solution to (\ref{EL}). Then $R$ $=$ \textup{const.} and $A_{11,1}$ $=$
$0,$ $A_{11,\bar{1}}$ $=$ $0.$
\end{proposition}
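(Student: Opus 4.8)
The plan is to combine the pseudo-Einstein condition \eqref{pE-1} with the Euler-Lagrange system \eqref{EL} to kill the torsion-derivative terms and reduce everything to a single elliptic (subelliptic) equation for $R$. Recall that pseudo-Einstein in dimension three means $R_{,1}=iA_{11,\bar1}$, and differentiating once more (using the commutation identities for covariant derivatives together with this relation) lets one rewrite the mixed second derivatives of $A_{11}$ appearing in \eqref{EL} in terms of second derivatives of $R$. Concretely, from \eqref{pE-1} one gets $R_{,11}=iA_{11,\bar11}$ and $R_{,1\bar1}=iA_{11,\bar1\bar1}$, and taking the conjugate relation $R_{,\bar1}=-iA_{\bar1\bar1,1}$ gives $R_{,\bar1\bar1}=-iA_{\bar1\bar1,11}$ etc. The first scalar equation in \eqref{EL}, namely $R_{,11}-\frac i2(A_{11,1\bar1}-A_{11,\bar11})=0$, should, after substituting these, become an identity of the form $R_{,11}=$ (something built from $R$); similarly the second equation $-4\Delta_b R-i(A_{\bar1\bar1,11}-A_{11,\bar1\bar1})=0$ becomes $-4\Delta_b R + (R_{,\bar1\bar1}+R_{,11})=0$ or some such combination — i.e. a second-order equation for $R$ alone.

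First I would write out carefully the covariant-derivative commutation formulas on a CR $3$-manifold (these are standard, cf.\ \cite{Lee2} and are already invoked in the proof of Proposition~\ref{P-1stV}): $u_{,1\bar1}-u_{,\bar11}=ih^{1\bar1}u_0\cdot(\text{const})$ for a function, and the analogue $A_{11,1\bar1}-A_{11,\bar11}=iA_{11,0}+2RA_{11}$ used above, plus the commutation of $\bar1$- and $1$-derivatives on the tensor $A_{11}$ which brings in curvature $\times A$ and torsion $\times A$ terms. Then I would substitute \eqref{pE-1} and its covariant derivatives into \eqref{EL} to obtain a closed equation $L R = 0$ where $L$ is a (real) second-order subelliptic operator; the expectation, given that the functional $E$ differs from the total $Q'$-curvature functional only in the coefficient of $|A|^2$, is that this reduces to something like $\Delta_b R = c R$ with the wrong sign to allow nonconstant solutions — or more precisely to an equation forcing $\int_M |\nabla_b R|^2 \le 0$ after integrating against $R$. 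Actually the cleanest route: pair the scalar equation with $R$, integrate by parts, and show the integrand becomes a perfect square (or nonpositive), forcing $R_{,1}=0$, hence $R=$ const (since $R$ is CR-pluriharmonic-type and $M$ is connected — constancy of a function with vanishing $\nabla_b$ follows from $d\theta\ne0$).

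Once $R$ is constant, pseudo-Einstein \eqref{pE-1} gives immediately $A_{11,\bar1}=-iR_{,1}=0$. It then remains to show $A_{11,1}=0$. For this I would return to the second equation of \eqref{EL}: with $R$ constant, $\Delta_b R=0$, so $A_{\bar1\bar1,11}=A_{11,\bar1\bar1}$; combined with the commutation relation relating $A_{11,\bar1\bar1}$ to $A_{11,1\bar1}$... wait, rather I would use the first equation of \eqref{EL} with $R_{,11}=0$, which gives $A_{11,1\bar1}=A_{11,\bar11}$, i.e. $iA_{11,0}+2RA_{11}=0$, so $A_{11,0}=2iRA_{11}$. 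Then I would use an integral Bochner-type identity for $A_{11}$ — integrate $|A_{11,1}|^2$ or $|A_{11,\bar1}|^2$ by parts, convert $A_{11,1\bar1}$-type terms using the commutation formulas and the relations just derived, and exploit $A_{11,\bar1}=0$ already known — to conclude $A_{11,1}=0$ as well.

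The main obstacle I anticipate is bookkeeping with the commutation relations for covariant derivatives of the torsion tensor $A_{11}$: unlike for functions, commuting $\nabla_1$ and $\nabla_{\bar1}$ on $A_{11}$ produces curvature and torsion terms (schematically $R\cdot A$, $A\cdot A\cdot(\bar A)$, and derivative-of-$A$ terms), and one must make sure that after using pseudo-Einstein these all collapse so that the resulting equation for $R$ (and later for $A_{11}$) is genuinely a clean elliptic identity rather than something still coupling $R$ and $A$ nonlinearly. A secondary subtlety is that $\Delta_b$ is only subelliptic, so "constant" must be argued from $\nabla_b R\equiv0$ (which does give constancy on a connected contact manifold since the horizontal distribution is bracket-generating), not from maximum-principle or unique-continuation folklore for the Laplacian; but this is routine once the vanishing of the horizontal gradient is in hand.
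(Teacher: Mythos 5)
Your plan is essentially the paper's own proof: substituting the pseudo-Einstein relation (\ref{pE-1}) and its conjugate, differentiated once, into the second equation of (\ref{EL}) yields $3\Delta_b R=0$, hence $R=\text{const}$ on the closed manifold, then $A_{11,\bar{1}}=0$ follows from (\ref{pE-1}), and finally the first equation of (\ref{EL}) together with the commutation relation $iA_{11,0}+2RA_{11}=A_{11,1\bar{1}}-A_{11,\bar{1}1}$ and one integration by parts force $\int_M|A_{11,1}|^2\,\theta\wedge d\theta=0$ --- exactly the identities the paper combines in (\ref{Int})--(\ref{Int-1}). The only blemishes are harmless slips in your heuristic paragraph (it should read $R_{,\bar{1}1}=-iA_{\bar{1}\bar{1},11}$, and the reduced second equation is $-3\Delta_b R=0$ rather than one involving $R_{,11}+R_{,\bar{1}\bar{1}}$), and these do not affect the argument.
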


\begin{proof}
By the condition of $(J,\theta )$ being pseudo-Einstein, we have 
\begin{equation}
R_{,1}=iA_{11,\bar{1}}.  \label{pE}
\end{equation}

\noindent Substituting (\ref{pE}) into the second equation of (\ref{EL})
gives $3\Delta _{b}R=0$ by noting that $\Delta _{b}R$ $=$ $R_{,1\bar{1}}+R_{,%
\bar{1}1}$ ($h^{1\bar{1}}$ $=$ $h_{1\bar{1}}$ $=$ $1$)$.$ It follows that $R$
$=$ const. since $M$ is closed. We now multiply the second equation of (\ref%
{EL}) by $R$ and integrate over $M$ with respect to the volume form $\theta
\wedge d\theta .$ After integrating by parts we obtain%
\begin{equation}
\int_{M}(-4|\nabla _{b}R|^{2}+iR_{,11}A_{\bar{1}\bar{1}}-iR_{,\bar{1}\bar{1}%
}A_{11})\theta \wedge d\theta =0,  \label{Int}
\end{equation}

\noindent where $|\nabla _{b}R|^{2}$ $:=$ $2h^{1\bar{1}}R_{,1}R_{,\bar{1}}$ $%
=$ $2R_{,1}R_{,\bar{1}}.$ From the first equation of (\ref{EL}) and the
commutation relation 
\begin{equation*}
iA_{11.0}=A_{11,1\bar{1}}-A_{11,\bar{1}1}-2RA_{11},
\end{equation*}

\noindent it follows that%
\begin{eqnarray}
R_{,11} &=&iRA_{11}-\frac{1}{2}A_{11,0}  \label{R2} \\
&=&\frac{1}{2}i(A_{11,1\bar{1}}-A_{11,\bar{1}1}).  \notag
\end{eqnarray}

\noindent Substituting (\ref{R2}) into (\ref{Int}) gives%
\begin{equation}
\int_{M}(-4|\nabla _{b}R|^{2}+|A_{11,1}|^{2}-|A_{11,\bar{1}}|^{2})\theta
\wedge d\theta =0  \label{Int-1}
\end{equation}

\noindent by integrating by parts. Now making use of $R$ $\equiv $ const.
and (\ref{pE}) (so $A_{11,\bar{1}}$ $=$ $0$) in (\ref{Int-1}), we get $%
\int_{M}|A_{11,1}|^{2}\theta \wedge d\theta =0$ and hence $A_{11,\bar{1}}$ $%
= $ $0.$ We have completed the proof.
\end{proof}

Let $\tau $ denote the torsion tensor of the pseudohermitian connection $%
\nabla ,$ i.e.%
\begin{equation*}
\tau (U,V):=\nabla _{U}V-\nabla _{V}U-[U,V]
\end{equation*}

\noindent for any tangent vector fields $U,$ $V$. Recall that the Reeb
vector field $T$ is the unique vector field such that $\theta (T)$ $=$ $1$
and $d\theta (T,\cdot )$ $=$ $0.$ It is not hard to see from the formulas
for Lie brackets in \cite[page 418]{Lee} that for $Y,W$ $\in $ $\xi $ 
\begin{equation}
\tau (Y,W)=d\theta (Y,W)T,  \label{3-1}
\end{equation}

\noindent and%
\begin{equation}
U(g_{J,\theta }(Y,W))=g_{J,\theta }(\nabla _{U}Y,W)+g_{J,\theta }(Y,\nabla
_{U}W),  \label{3-1a}
\end{equation}

\noindent where $g_{J,\theta }$ is the Levi metric defined by $g_{J,\theta
}(Y,W)$ $:=$ $d\theta (Y,JW)$ and $U$ is any tangent vector.

\begin{lemma}
\label{L-APT} On a pseudohermitian 3-manifold $(M,\xi ,J,\theta ),$ suppose $%
R$ $\equiv$ \textup{const.} and $A_{11,1}$ $=$ $0,$ $A_{11,\bar{1}}$ $=$ $0.$
Then $\nabla _{X}R$ $=$ $0$ and $\nabla _{X}\tau $ $=$ $0$ for any $X\in \xi
.$
\end{lemma}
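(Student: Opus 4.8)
The statement asserts that the scalar curvature $R$ and the torsion tensor $\tau$ are parallel along the contact distribution $\xi$, given that $R$ is constant and the two covariant derivatives $A_{11,1}$ and $A_{11,\bar 1}$ vanish. The claim about $R$ is essentially immediate: since $\nabla_{Z_1}R = R_{,1}$ and $\nabla_{Z_{\bar 1}}R = R_{,\bar 1}$, and $R \equiv \mathrm{const}$ forces $R_{,1} = R_{,\bar 1} = 0$, we get $\nabla_X R = 0$ for every $X \in \xi$. So the real content is $\nabla_X \tau = 0$ for $X \in \xi$.

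\textbf{Handling the torsion.} I would use the identification $(\ref{3-1})$: for $Y, W \in \xi$ one has $\tau(Y,W) = d\theta(Y,W)\,T$. The plan is to differentiate this relation covariantly in a direction $X \in \xi$. Using that $\nabla$ is a metric connection for $g_{J,\theta}$ (by $(\ref{3-1a})$) and that $\nabla \theta = 0$, $\nabla(d\theta) = 0$ for the pseudohermitian connection, the only way $\tau$ can fail to be parallel is through $\nabla_X T$. But it is standard that $\nabla_X T = \tau(T,X)$-type expressions are governed by the torsion form $A_{11}$: concretely, in an admissible coframe one has $\nabla_{Z_1} T = A_1{}^{\bar 1} Z_{\bar 1} = A_{11} h^{1\bar 1} Z_{\bar 1}$ (and the conjugate), so $\nabla T$ is expressed entirely in terms of $A_{11}$. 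Therefore $\nabla_X \tau$, for $X \in \xi$, is a contraction of $\nabla_X T$ (equivalently of the components $A_{11,1}$, $A_{11,\bar 1}$ together with $A_{11}$ times connection terms that cancel). The cleanest route is to write $\tau$ in the frame $\{Z_1, Z_{\bar 1}\}$ as essentially $A_{11}\,\theta^{\bar 1}\otimes\theta^{\bar 1}\otimes Z_1 + \text{conjugate}$ (up to the $d\theta(Y,W)T$ piece, which is parallel), and then $\nabla_X$ of this tensor produces exactly the covariant derivatives $A_{11,1}$ and $A_{11,\bar 1}$ in its components; since both vanish by hypothesis, $\nabla_X \tau = 0$ for all $X\in\xi$.

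\textbf{Main obstacle.} The delicate point is bookkeeping: one must be careful that the pseudohermitian torsion tensor $\tau$ in the sense of $\nabla_U V - \nabla_V U - [U,V]$ decomposes into the "contact" part $d\theta(Y,W)T$ on $\xi\times\xi$ and the "Webster torsion" part $A_{11}$ involving the Reeb direction, and that only the latter carries nontrivial covariant derivative. Thus I would first recall (from \cite{Lee}) the precise formula expressing $\tau$ on all pairs of the frame $\{T, Z_1, Z_{\bar 1}\}$, namely that $\tau(Z_1, Z_{\bar 1})$ is a multiple of $T$ (constant coefficient), while $\tau(T, Z_1) = A_{1\bar 1}Z_{\bar 1}$ up to sign/normalization conventions, i.e. is controlled by $A_{11}$. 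Then computing $\nabla_X \tau$ for $X = Z_1$ or $Z_{\bar 1}$ reduces to differentiating these coefficient functions: the $d\theta$-coefficient is constant (so contributes nothing), and the $A_{11}$-coefficient differentiates to $A_{11,1}$ or $A_{11,\bar 1}$, both zero by hypothesis. Keeping the frame-dependent connection correction terms straight — so that the combination is genuinely the covariant derivative of the component, not merely a partial derivative — is the one place where care is needed; once set up correctly the conclusion is a direct consequence of $A_{11,1} = A_{11,\bar 1} = 0$.
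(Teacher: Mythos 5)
Your overall strategy coincides with the paper's: split $\tau$ into the horizontal--horizontal part $\tau (Y,W)=d\theta (Y,W)T$ from (\ref{3-1}), which is parallel in contact directions thanks to metric compatibility (\ref{3-1a}) and $\nabla \circ J=J\circ \nabla $, and the part pairing $T$ with $\xi $, which is the tensor $A$, whose horizontal covariant derivatives are exactly $A_{11,1}$ and $A_{11,\bar{1}}$ and vanish by hypothesis; the statement about $R$ is indeed immediate.

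However, one of the ``standard facts'' you invoke is wrong, and taken literally it breaks the computation. You assert $\nabla _{Z_{1}}T=A_{1}{}^{\bar{1}}Z_{\bar{1}}$ and that ``the only way $\tau $ can fail to be parallel is through $\nabla _{X}T$.'' For the Tanaka--Webster connection the Reeb field is parallel, $\nabla T=0$ (this is (4.5) in \cite{Lee}, and the paper's proof uses it twice); the torsion $A$ enters not through $\nabla T$ but through the Lie bracket, $[Z_{1},T]=A_{1}{}^{\bar{1}}Z_{\bar{1}}-\omega _{1}^{1}(T)Z_{1}$, equivalently through $\tau (Z_{1},T)=-A_{1}{}^{\bar{1}}Z_{\bar{1}}$ as in (\ref{3-4}). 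If you compute with your version, then $(\nabla _{X}\tau )(Y,W)$ for $Y,W\in \xi $ picks up the extra term $d\theta (Y,W)\nabla _{X}T$ and $(\nabla _{X}\tau )(Y,T)$ the extra term $-\tau (Y,\nabla _{X}T)$; both are proportional to the \emph{undifferentiated} torsion $A_{11}$, which the hypotheses $A_{11,1}=A_{11,\bar{1}}=0$ do not annihilate (on a Rossi sphere $A_{11}\neq 0$), so that version of the argument would not close. With $\nabla T=0$ in place these terms are absent, the $d\theta \otimes T$ piece is parallel as you say, and on the mixed slots $(\nabla _{X}\tau )(Z_{1},T)=-(\nabla _{X}A)(Z_{1})=0$ by hypothesis, which is precisely the paper's proof. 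A smaller bookkeeping slip: the $A$-part of $\tau $ is not of the form $A_{11}\,\theta ^{\bar{1}}\otimes \theta ^{\bar{1}}\otimes Z_{1}$ living on $\xi \times \xi $; it occupies the slots pairing $T$ with $\xi $ (it carries a $\theta $ factor), while the restriction of $\tau $ to $\xi \times \xi $ consists only of the $d\theta (\cdot ,\cdot )T$ term.
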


\begin{proof}
It is clear that $R$ $=$ const implies $\nabla _{X}R$ $=$ $X(R)=0.$ To prove
that $\nabla _{X}\tau $ $=$ $0$ for any $X\in \xi ,$ it is enough to show $%
(\nabla _{X}\tau )(Y,W)$ $=$ $0$ for $Y,W$ $\in $ $\xi $, $(\nabla _{X}\tau
)(Y,T)$ $=$ $(\nabla _{X}\tau )(T,Y)$ $=$ $0$ and $(\nabla _{X}\tau )(T,T)$ $%
=$ $0.$ We compute%
\begin{eqnarray}
&&\nabla _{X}(\tau (Y,W)\overset{(\ref{3-1})}{=}X(d\theta (Y,W))T+d\theta
(Y,W)\nabla _{X}T  \label{3-2} \\
&=&X(d\theta (Y,W))T\text{ \ (}\nabla T=0\text{ by \cite[(4.5) on page 418]%
{Lee}).}  \notag
\end{eqnarray}

\noindent It follows from (\ref{3-2}) and (\ref{3-1}) that

\begin{eqnarray*}
(\nabla _{X}\tau )(Y,W) &=&\nabla _{X}(\tau (Y,W))-\tau (\nabla
_{X}Y,W)-\tau (Y,\nabla _{X}W) \\
&=&\{X(d\theta (Y,W))-d\theta (\nabla _{X}Y,W)-d\theta (Y,\nabla _{X}W)\}T \\
&=&0.
\end{eqnarray*}

\noindent Here we have used that $\nabla $ preserves $\xi $ \cite[(4.5)]{Lee}
and applied (\ref{3-1a}) with $W$ replaced by $-JW$ and the property that $%
\nabla \circ J=J\circ \nabla $ (see \cite[Proposition 3.1 2)]{Tan} for
instance). We next compute%
\begin{eqnarray}
(\nabla _{X}\tau )(Y,T) &=&\nabla _{X}(\tau (Y,T))-\tau (\nabla
_{X}Y,T)-\tau (Y,\nabla _{X}T)  \label{3-3} \\
&=&\nabla _{X}(\tau (Y,T))-\tau (\nabla _{X}Y,T)\text{ (since }\nabla T=0). 
\notag
\end{eqnarray}

Define the tensor $A$ $:=$ $A_{1}^{\bar{1}}\theta ^{1}\otimes Z_{\bar{1}}+A_{%
\bar{1}}^{1}\theta ^{\bar{1}}\otimes Z_{1}$ where $A_{1}^{\bar{1}}$ $=$ $h^{1%
\bar{1}}A_{11}$ and $A_{\bar{1}}^{1}$ is the complex conjugate of $A_{1}^{%
\bar{1}}.$ Observe that (extending the defining domain of $\tau $ to complex
tangent vectors by complex linearity) 
\begin{eqnarray}
\tau (Z_{1},T) &=&\nabla _{Z_{1}}T-\nabla _{T}Z_{1}-[Z_{1},T]  \label{3-4} \\
&=&0-\omega _{1}^{1}(T)Z_{1}-(A_{1}^{\bar{1}}Z_{\bar{1}}-\omega
_{1}^{1}(T)Z_{1})  \notag \\
&=&-A_{1}^{\bar{1}}Z_{\bar{1}}=-A(Z_{1})  \notag
\end{eqnarray}

\noindent by \cite[page 418]{Lee}. In fact we also have $\tau (fZ_{1},T)$ $=$
$-A(fZ_{1})$ for any complex function $f.$ So it holds that%
\begin{equation}
\tau (\nabla _{X}Z_{1},T)=-A(\nabla _{X}Z_{1}).  \label{3-5}
\end{equation}%
\noindent It follows from (\ref{3-4}) and (\ref{3-5}) that%
\begin{eqnarray}
&&\nabla _{X}(\tau (Z_{1},T))-\tau (\nabla _{X}Z_{1},T)  \label{3-6} \\
&=&-\nabla _{X}(A(Z_{1})+A(\nabla _{X}Z_{1})  \notag \\
&=&-(\nabla _{X}A)(Z_{1})=0  \notag
\end{eqnarray}

\noindent by the condition $A_{11,1}$ $=$ $0,$ $A_{11,\bar{1}}$ $=$ $0$ due
to Proposition \ref{P-pE_Sym}$.$ By taking complex conjugation, (\ref{3-6})
also holds for $Z_{\bar{1}}$ replacing $Z_{1}.$ So the RHS of (\ref{3-3})
vanishes. We have shown $(\nabla _{X}\tau )(Y,T)=0.$ Noting that $(\nabla
_{X}\tau )(T,Y)$ $=$ $-(\nabla _{X}\tau )(Y,T)=0.$ Clearly $(\nabla _{X}\tau
)(T,T)$ $=$ $0$ since $\tau $ is skew-symmetric. We have completed the proof.
\end{proof}

We call a pseudohermitian automorphism $\phi $ of $(M,\xi ,J,\theta )$ a 
\emph{sub-symmetry} at a point $x$ if $\phi (x)$ $=$ $x$ and $\phi _{\ast
}|_{\xi _{x}}=-1$ ($\phi _{\ast }T_{x}$ $=$ $T_{x}$ hence the Reeb orbit
through $x$ is fixed by $\phi ).$ A local sub-symmetry at a point $x$ means
that $\phi $ is only defined in a neighborhood of $x.$

\bigskip

\begin{pfn}
\emph{of Theorem \ref{main-H}}. Suppose that $(J,\theta )$ is a
pseudo-Einstein critical point of (\ref{En}). By Proposition \ref{P-1stV} $%
(J,\theta )$ satisfies the system (\ref{EL}) and hence from Proposition \ref%
{P-pE_Sym} it follows that%
\begin{equation*}
R=\text{const., }A_{11,1}=0,A_{11,\bar{1}}=0.
\end{equation*}

\noindent By Lemma \ref{L-APT} we obtain that the curvature $R$ and the
torsion (tensor) $\tau $ are parallel along the (horizontal) direction of
any contact vector. From the proof of \cite[Theorem 2.1]{Fal} (noting that
the Levi metric $g_{J,\theta }$ plays the role of the metric in the setting
of \cite{Fal}), for each point $x$ we can find a local sub-symmetry $\phi
_{x}$ such that $\phi _{x}(x)$ $=$ $x,$ $\phi _{x}^{2}$ $=$ $Id.$ Lift $\phi
_{x}$ to a local sub-symmetry $\tilde{\phi}_{\tilde{x}}$ on $\tilde{M},$ the
universal cover of $M,$ where $\tilde{x}$ $\in $ $\tilde{M}$ is a lift of $%
x,.$i.e. $\pi (\tilde{x})$ $=$ $x,$ $\pi $ $:$ $\tilde{M}$ $\rightarrow $ $M$
is the natural projection. Since $\tilde{M}$ is simply connected, we can
extend $\phi _{x}$ uniquely to a global pseudohermitian automorphism using
the parabolic exponential map in \cite[page 309]{JL} by a similar argument
in \cite[pp. 252-255]{KN} for extending an affine map. Observe that the
fixed point set of $\tilde{\phi}_{\tilde{x}}$ is a Reeb orbit $F_{\tilde{x}}$
in $\tilde{M},$ $\{F_{\tilde{x}}\}_{\tilde{x}\in \tilde{M}}$ foliate $\tilde{%
M}$ and $\{\tilde{\phi}_{\tilde{x}}\}_{\tilde{x}\in \tilde{M}}$ permutes the
Reeb orbits $\{F_{\tilde{x}}\}_{\tilde{x}\in \tilde{M}},$ since all $\tilde{%
\phi}_{\tilde{x}}$'s are pseudohermitian automorphisms. The sub-symmetry $%
\tilde{\phi}_{\tilde{x}}$ has the following properties:%
\begin{eqnarray*}
\tilde{\phi}_{\tilde{x}}(\tilde{x}) &=&\tilde{x},\text{ (}\tilde{\phi}_{%
\tilde{x}})_{\ast }|_{\xi _{\tilde{x}}}=-Id,\text{ }\tilde{\phi}_{\tilde{x}%
}|_{F_{\tilde{x}}}=Id,\text{ } \\
\tilde{\phi}_{\tilde{x}}^{2} &=&Id,\text{ so }\tilde{\phi}_{\tilde{x}}^{-1}=%
\tilde{\phi}_{\tilde{x}}.
\end{eqnarray*}%
Let $\QTR{up}{Aut}_{\psi .h.}(\tilde{M},\xi ,J,\theta )$ denote the group of
all pseudohermitian automorphisms. We claim that%
\begin{equation}
\QTR{up}{Aut}_{\psi .h.}(\tilde{M},\xi ,J,\theta )\text{ acts on }(\tilde{M}%
,\xi ,J,\theta )\text{ transitively.}  \label{hom}
\end{equation}

\noindent Observe that $\tilde{M}$ is complete (meaning that it is complete
as a metric space) by, for instance, \cite[Theorem 7.1 (b)]{Str}. Next,
given $p,$ $q$ $\in $ $\tilde{M},$ we can find a Legendrian (horizontal)
geodesic (with respect to the Levi-metric $g_{J,\theta })$ $\gamma $
connecting $p$ and $q,$ parametrized by the arc length of $g_{J,\theta }$ by 
\cite[Theorem 7.1 (a)]{Str}. Let $m\in \gamma $ be the middle point of the
curve $\gamma .$ It follows that $\tilde{\phi}_{m}$ maps $p$ (resp. $q$) to $%
q$ (resp. $p$)$.$ We have shown (\ref{hom}). That is, $(\tilde{M},\xi
,J,\theta )$ is homogeneous as a pseudohermitian manifold.
\end{pfn}

\begin{remark}
\label{R-3-0} We notice that in \cite{Fal} the authors make the assumption
on homogeneity to classify all possible sub-symmetric spaces through a
Lie-theoretic argument.
\end{remark}

\begin{pfn}
\emph{of Corollary \ref{main}}. By (\ref{BK}) (together with (\ref{h11})) we
verify that a Rossi sphere $(J_{(s)},$ $\hat{\theta})$ is pseudo-Einstein
and satisfies (\ref{EL}) by noting that ($\theta =\hat{\theta})$%
\begin{eqnarray*}
A_{\bar{1}\bar{1}(s),0} &=&\hat{T}A_{\bar{1}\bar{1}(s)}-2\omega _{1(s)}^{1}(%
\hat{T})A_{11(s)} \\
&=&0-2(-i)(1+2s^{2})R_{0}A_{11(s)} \\
&=&2iR_{(s)}A_{11(s)}.
\end{eqnarray*}

\noindent So, $(J_{(s)},$ $\hat{\theta})$ is a pseudo-Einstein critical
point of (\ref{En}) in view of Proposition \ref{P-1stV}. Conversely, by
Theorem \ref{main-H} $(S^{3},\hat{\xi},J,\theta )$ is homogeneous. In
particular, it is a homogeneous CR 3-manifold. According to Cartan \cite[%
page 69]{Cartan}, the CR structure $J$ must be left-invariant on $SU(2)$ (=$%
S^{3}).$ By \cite[Proposition 5.1 (c)]{BJ} we conclude that $J$ is
isomorphic to a Rossi sphere $J_{(s)}$ for a unique $s$ $\leq $ $0$ (by
comparing (\ref{R6-2}) with the coframe taken in the proof of \cite[%
Proposition 5.1]{BJ}, we get the parameter relation: 
\begin{equation*}
\sqrt{t}=\sqrt{1+s^{2}}-s,
\end{equation*}%
\noindent so $t$ $\geq $ $1$ corresponds to $s$ $\leq $ $0,$ where $t$ is
strictly decreasing as a function of $s$). Moreover, that $\theta $ is $%
SU(2) $-invariant implies that $\theta $ is a constant multiple of $\hat{%
\theta}.$ We have thus completed the proof.
\end{pfn}

\begin{remark}
\label{R-3-1} In \cite{BJ} besides Rossi spheres some other examples of
homogeneous CR 3-manifolds are discussed. Let us write down $R$ and $A_{11}$
for two less known examples: $SL_{2}(\mathbb{R})$ and the Euclidean group $%
E_{2}$ $=$ $SO_{2}\rtimes \mathbb{R}^{2}.$ For $SL_{2}(\mathbb{R})$ there
are a family of homogeneous CR structures with parameter $t$ (see \cite[%
Proposition 4.2]{BJ}). With respect to a suitable unitary coframe in the
proof of \cite[Proposition 4.2]{BJ}, we obtain ($t\neq 0,$ $-1$)%
\begin{equation*}
R_{(t)}=-\frac{1+6t+t^{2}}{4|t|(1+t)},\text{ }A_{11(t)}=i\frac{(1-t)^{2}}{%
4|t|(1+t)}.
\end{equation*}%
For $E_{2}$ there is a unique homogeneous CR structure up to $Aut(E_{2})$
(see \cite[Proposition 7.1 (a)]{BJ}). With respect to a suitable unitary
coframe in the proof of \cite[Proposition 7.1 (a)]{BJ}, we easily obtain
that $R$ $=$ $\frac{1}{2}$ and $A_{11}$ $=$ $\frac{i}{2}.$
\end{remark}

\section{Second variation: proof of Theorem \protect\ref{T-3}\label{Sec4}}

Starting from (\ref{1stV}) we compute the second variation in $J,$ at a
critical point $(\hat{J},\hat{\theta})$ where $(2iR_{,\bar{1}\bar{1}}-2RA_{%
\bar{1}\bar{1}}+iA_{\bar{1}\bar{1},0})=0$,%
\begin{eqnarray}
\delta _{J}^{2}E(\hat{J},\hat{\theta}) &=&\int_{M}(2i\delta _{J}R_{,\bar{1}%
\bar{1}}-2(\delta _{J}R)A_{\bar{1}\bar{1}}  \label{S1} \\
&&-2R(\delta _{J}A_{\bar{1}\bar{1}})+i\delta _{J}A_{\bar{1}\bar{1},0})E_{11}+%
\text{\textup{conjugate}},  \notag
\end{eqnarray}

\noindent Applying (\ref{B3}), (\ref{B1}) and (\ref{B4}) to the RHS of (\ref%
{S1}) we obtain%
\begin{eqnarray}
&&\delta _{J}^{2}E(\hat{J},\hat{\theta})  \label{S2} \\
&=&\int_{M}\left\{ 
\begin{array}{c}
(-iE_{\bar{1}\bar{1},11}+iE_{11,\bar{1}\bar{1}}-\hat{A}_{\bar{1}\bar{1}%
}E_{11}-\hat{A}_{11}E_{\bar{1}\bar{1}})_{,\bar{1}\bar{1}} \\ 
-2i(E_{11,\bar{1}\bar{1}}-\hat{A}_{\bar{1}\bar{1}}E_{11})\hat{A}_{\bar{1}%
\bar{1}}+2i(E_{\bar{1}\bar{1},11}-\hat{A}_{11}E_{\bar{1}\bar{1}})\hat{A}_{%
\bar{1}\bar{1}} \\ 
+2i\hat{R}E_{\bar{1}\bar{1},0}+E_{\bar{1}\bar{1},00}-2\hat{A}_{\bar{1}\bar{1}%
}(\hat{A}_{\bar{1}\bar{1}}E_{11}+\hat{A}_{11}E_{\bar{1}\bar{1}})%
\end{array}%
\right\} E_{11}  \notag \\
&&+\text{conjugate}  \notag
\end{eqnarray}

\noindent (volume form $\hat{\theta}\wedge d\hat{\theta}$ omitted). The
\textquotedblleft slice condition" in \cite[p.235]{CL} for $\delta J=2E$
reads as $B_{\hat{J}}E$ $=$ $0,$ i.e.%
\begin{equation}
iE_{11,\bar{1}\bar{1}}-\hat{A}_{11}E_{\bar{1}\bar{1}}=-iE_{\bar{1}\bar{1}%
,11}-\hat{A}_{\bar{1}\bar{1}}E_{11}.  \label{S3}
\end{equation}

\noindent From the commutation relation $E_{\bar{1}\bar{1},1\bar{1}}-E_{\bar{%
1}\bar{1},\bar{1}1}=iE_{\bar{1}\bar{1},0}-2\hat{R}E_{\bar{1}\bar{1}}$ and an
integration by parts$,$ it follows that%
\begin{eqnarray}
&&\int_{M}2i\hat{R}E_{\bar{1}\bar{1},0}E_{11}\hat{\theta}\wedge d\hat{\theta}
\label{S4} \\
&=&\int_{M}\{4\hat{R}^{2}|E_{11}|^{2}-2\hat{R}|E_{\bar{1}\bar{1},1}|^{2}+2%
\hat{R}|E_{11,1}|^{2}\}\hat{\theta}\wedge d\hat{\theta}.  \notag
\end{eqnarray}

\noindent Making use of (\ref{S3}), (\ref{S4}) and integrating by parts
again, we finally reduce (\ref{S2}) to%
\begin{eqnarray}
&&\delta _{J}^{2}E(\hat{J},\hat{\theta})  \label{S5} \\
&=&\int_{M}\left\{ 
\begin{array}{c}
-2|E_{11,0}|^{2}-4\hat{R}|E_{\bar{1}\bar{1},1}|^{2}+4\hat{R}|E_{11,1}|^{2}
\\ 
+(8\hat{R}^{2}-4|\hat{A}_{11}|^{2})|E_{11}|^{2}%
\end{array}%
\right\}  \notag \\
&&+[(2i+2)E_{11,\bar{1}}^{2}-2iE_{11,1}E_{\bar{1}\bar{1},1}+(2i-2)E_{11}^{2}%
\hat{A}_{\bar{1}\bar{1}}]\hat{A}_{\bar{1}\bar{1}}  \notag \\
&&+\text{conjugate of }[\text{ }\cdot \cdot \cdot \text{ }]\hat{A}_{\bar{1}%
\bar{1}}.  \notag
\end{eqnarray}

\noindent (volume form $\hat{\theta}\wedge d\hat{\theta}$ omitted).

Now we are going to compute%
\begin{eqnarray}
\delta _{\theta }\delta _{J}E(\hat{J},\hat{\theta}) &=&\int_{M}(2i\delta
_{\theta }R_{,\bar{1}\bar{1}}-2(\delta _{\theta }R)\hat{A}_{\bar{1}\bar{1}}
\label{S6} \\
&&-2\hat{R}(\delta _{\theta }A_{\bar{1}\bar{1}})+i\delta _{\theta }A_{\bar{1}%
\bar{1},0})E_{11}+\text{\textup{conjugate}},  \notag
\end{eqnarray}

\noindent Substituting (\ref{B7}), (\ref{B1}) and (\ref{B8}) into (\ref{S6})
gives%
\begin{eqnarray}
&&\delta _{\theta }\delta _{J}E(\hat{J},\hat{\theta})  \label{S7} \\
&=&\int_{M}\left\{ 
\begin{array}{c}
(6\triangle _{b}h-2ih_{,0})\hat{A}_{\bar{1}\bar{1}} \\ 
-8i(\triangle _{b}h)_{,\bar{1}\bar{1}}+2h_{,\bar{1}\bar{1}0}%
\end{array}%
\right\} E_{11}+\text{conjugate.}  \notag
\end{eqnarray}

\noindent To compute $\delta _{\theta }^{2}E(\hat{J},\hat{\theta})$ we apply
(\ref{B9}), (\ref{B11}) to the $\delta _{\theta \text{ }}$ of (\ref{1stV2}):%
\begin{equation*}
\delta _{\theta }^{2}E(\hat{J},\hat{\theta})=\int_{M}\{-8\delta _{\theta
}(\Delta _{b}R)-2i\delta _{\theta }(A_{\bar{1}\bar{1},11}-A_{11,\bar{1}\bar{1%
}})\}h
\end{equation*}

\noindent to conclude via integrating by parts that%
\begin{eqnarray}
&&\delta _{\theta }^{2}E(\hat{J},\hat{\theta})  \label{S8} \\
&=&\int_{M}\left\{ 
\begin{array}{c}
-16\hat{R}|\nabla _{b}h|^{2}+8i\hat{A}_{\bar{1}\bar{1}}(h_{,1})^{2}-8i\hat{A}%
_{11}(h_{,\bar{1}})^{2} \\ 
+32(\triangle _{b}h)^{2}-8|h_{,11}|^{2}%
\end{array}%
\right\} \hat{\theta}\wedge d\hat{\theta}.  \notag
\end{eqnarray}

\noindent For Rossi spheres $(S^{3},J_{(s)},\theta )$ with $\theta $ $=$ $%
\hat{\theta}/R_{0}$ (see Subsection \ref{Subsection1-1}) we compute via (\ref%
{BK}) that%
\begin{eqnarray*}
&&R_{(s)}^{2}-|A|_{J_{(s)},\theta }^{2} \\
&=&(1+2s^{2})^{2}R_{0}^{2}-4s^{2}(1+s^{2})R_{0}^{2}=R_{0}^{2}.
\end{eqnarray*}

\noindent Together with $\hat{\theta}\wedge d\hat{\theta}$ $=$ $%
8dv_{S^{3}}^{Eucl}$ (recall (\ref{Rz}) for $\hat{\theta}$ and $%
dv_{S^{3}}^{Eucl}$ denotes the Euclidean volume form of $S^{3}),$ we have%
\begin{eqnarray*}
E(J_{(s)},\theta ) &=&\int_{S^{3}}(R_{(s)}^{2}-|A|_{J_{(s)},\theta
}^{2})\theta \wedge d\theta \\
&=&\int_{S^{3}}R_{0}^{2}\frac{\hat{\theta}\wedge d\hat{\theta}}{R_{0}^{2}}%
=\int_{S^{3}}8\cdot dv_{S^{3}}^{Eucl}=16\pi ^{2}.
\end{eqnarray*}

\noindent So, Rossi spheres are all critical points of $E(J,\theta )$ (as
shown after the proof of Proposition \ref{P-1stV}) with the same energy.

\bigskip

{\center{\Large {Appendix}}}

\medskip

We give some examples for second variations in $\theta $ and $J$ of $E$ at
the standard pseudohermitian $3$-sphere ($S^{3},$ $J_{(0)},$ $\hat{\theta}).$
Substituting $\hat{R}$ $=$ $1$ and $\hat{A}_{11}$ $=$ $0$ into (\ref{S8})
gives%
\begin{equation}
\delta _{\theta }^{2}E(J_{(0)},\hat{\theta})=\int_{S^{3}}(-16|\nabla
_{b}h|^{2}+32(\triangle _{b}h)^{2}-8|h_{,11}|^{2})\hat{\theta}\wedge d\hat{%
\theta}.  \label{A2}
\end{equation}

\noindent Using integration by parts and the commutation relation $h_{,\bar{1%
}1\bar{1}}$ $-$ $h_{,\bar{1}\bar{1}1}$ $=$ $ih_{,\bar{1}0}$ $-$ $h_{,\bar{1}%
} $ (\cite[formula (9)]{CMY}), we compute (volume form $\hat{\theta}\wedge d%
\hat{\theta}$ omitted)%
\begin{eqnarray}
\int_{S^{3}}h_{,11}h_{,\bar{1}\bar{1}} &=&-\int_{S^{3}}h_{,1}(h_{,\bar{1}1%
\bar{1}}-ih_{,\bar{1}0}+h_{,\bar{1}})  \label{A3} \\
&=&-\int_{S^{3}}h_{,1}(h_{,\bar{1}1\bar{1}}-ih_{,0\bar{1}}+h_{,\bar{1}})%
\text{ (since }\hat{A}_{11}=0)  \notag \\
&=&\int_{S^{3}}h_{,1\bar{1}}h_{,\bar{1}1}-i\int_{S^{3}}h_{,1\bar{1}%
}h_{,0}-\int_{S^{3}}|h_{,1}|^{2}.  \notag
\end{eqnarray}

\noindent Since $h_{,1\bar{1}}-h_{,\bar{1}1}=ih_{,0},$ we have 
\begin{eqnarray}
h_{,1\bar{1}} &=&\frac{1}{2}(\triangle _{b}h+ih_{,0})\text{ and}  \label{A4}
\\
h_{,\bar{1}1} &=&\frac{1}{2}(\triangle _{b}h-ih_{,0})\text{ (since }h\text{
is real)}  \notag
\end{eqnarray}

\noindent Substituting (\ref{A4}) into (\ref{A3}), we can reduce (\ref{A3})
to 
\begin{equation}
\int_{S^{3}}h_{,11}h_{,\bar{1}\bar{1}}=\frac{1}{4}\int_{S^{3}}(\triangle
_{b}h)^{2}+\frac{3}{4}\int_{S^{3}}(h_{,0})^{2}-\frac{i}{2}%
\int_{S^{3}}(\triangle _{b}h)h_{,0}-\frac{1}{2}\int_{S^{3}}|\nabla
_{b}h|^{2}.  \label{A4-1}
\end{equation}

\noindent Let $H_{p,q,1}$ denote the restriction to $S^{3}$ of the space of
the homogeneous complex harmonic polynomials of degree $p+q,$ where $p$ is
the holomorphic homogeneity and $q$ the antiholomorphic one. Then for $f$ $%
\in $ $H_{p,q,1}$ one has%
\begin{eqnarray}
-\triangle _{b}f &=&\frac{1}{2}(pq+\frac{1}{2}(p+q))f,  \label{A5} \\
Tf &=&i\frac{(p-q)}{2}f  \notag
\end{eqnarray}%
\noindent (see \cite[Proposition 2.2 on p.10]{ACMY}; note that $\hat{\theta}$
is twice the contact form $\theta _{0}$ in \cite[p.8]{ACMY}, so the
sublaplacian there is twice the sublaplacian here while $T$ there is exactly
the Reeb vector field here, i.e. $T=\hat{T}$). By (\ref{A4-1}) one reduces (%
\ref{A2}) to%
\begin{equation}
\delta _{\theta }^{2}E(J_{(0)},\hat{\theta})=30\int_{S^{3}}(\triangle
_{b}h)^{2}-6\int_{S^{3}}(h_{,0})^{2}+4i\int_{S^{3}}(\triangle
_{b}h)h_{,0}-12\int_{S^{3}}|\nabla _{b}h|^{2}.  \label{A6}
\end{equation}

\noindent Taking $h=f+\bar{f}$ in (\ref{A5}) and writing $\lambda =\frac{1}{2%
}(pq+\frac{1}{2}(p+q)),$ $\mu =\frac{p-q}{2},$ we have -$\triangle
_{b}h=\lambda (f+\bar{f})$ and $h_{,0}$ $=$ $Th$ $=$ $i\mu (f-\bar{f}).$
Substituting these formulas into (\ref{A6}) and noting that $%
\int_{S^{3}}|\nabla _{b}h|^{2}$ $=$ $-\int_{S^{3}}(\triangle _{b}h)h,$ we
reduce the RHS of (\ref{A6}) to%
\begin{equation}
60\int_{S^{3}}\triangle _{b}f\triangle _{b}\bar{f}-12\mu ^{2}\int_{S^{3}}f%
\bar{f}+4i\int_{S^{3}}\triangle _{b}(f+\bar{f})i\mu (f-\bar{f})-24\lambda
\int_{S^{3}}f\bar{f}.  \label{A7}
\end{equation}

\noindent Here we have used $\int_{S^{3}}f^{2}=0,$ $\int_{S^{3}}\bar{f}%
^{2}=0.$ Using (\ref{A5}) we can further reduce (\ref{A7}) and conclude from
(\ref{A6}) that 
\begin{eqnarray}
&&\delta _{\theta }^{2}E(J_{(0)},\hat{\theta})  \label{A8} \\
&=&\int_{S^{3}}\left\{ 
\begin{array}{c}
\frac{60}{4}(pq+\frac{1}{2}(p+q))^{2}-\frac{12}{4}(p-q)^{2} \\ 
-\frac{24}{2}(pq+\frac{1}{2}(p+q))%
\end{array}%
\right\} |f|^{2}\hat{\theta}\wedge d\hat{\theta}  \notag
\end{eqnarray}

\noindent for $\delta \theta =2h\theta =2(f+\bar{f})\hat{\theta}$, $f$ $\in $
$H_{p,q,1}.$ We now turn to compute $\delta _{J}^{2}E(J_{(0)},\hat{\theta}) $
for $\delta J$ $=$ $2E,$ $E=E_{11}\theta ^{1}\otimes Z_{\bar{1}}+$conjugate
with $E_{11}$ $\in $ $H_{p,q,1}.$ Starting from (\ref{S5}) with $\hat{R}$ $=$
$1$ and $\hat{A}_{11}$ $=$ $0,$ we have (volume form $\hat{\theta}\wedge d%
\hat{\theta}$ omitted) 
\begin{eqnarray}
&&\delta _{J}^{2}E(J_{(0)},\hat{\theta})  \label{A9} \\
&=&\int_{S^{3}}(-2|E_{11,0}|^{2}-4|E_{\bar{1}\bar{1}%
,1}|^{2}+4|E_{11,1}|^{2}+8|E_{11}|^{2}).  \notag
\end{eqnarray}

\noindent Via an integration by parts and the commutation relation $E_{11,%
\bar{1}1}-E_{11,1\bar{1}}$ $=$ $-iE_{11,0}-2E_{11}$ (noting that $\hat{R}$ $%
= $ $1),$ we reduce the RHS of (\ref{A9}) to%
\begin{equation}
\int_{S^{3}}(2E_{11,00}-4iE_{11,0})E_{\bar{1}\bar{1}\text{ }}(\text{note
that }8|E_{11}|^{2}\text{ is cancelled).}  \label{A10}
\end{equation}

\noindent We compute%
\begin{eqnarray}
E_{11,0} &=&TE_{11}-2\omega _{1}^{1}(T)E_{11}  \label{A11} \\
&=&TE_{11}-2(-i)E_{11}\text{ (}\omega _{1}^{1}=-i\hat{\theta}\text{ by (\ref%
{w11}))}  \notag \\
&=&i\left( \frac{p-q}{2}\right) E_{11}+2iE_{11}=i(\frac{m}{2}+2)E_{11}. 
\notag
\end{eqnarray}

\noindent Here we have written $m=p-q.$ Applying (\ref{A11}) to (\ref{A10})
we finally obtain%
\begin{equation}
\delta _{J}^{2}E(J_{(0)},\hat{\theta})=-\frac{1}{2}m(m+4)%
\int_{S^{3}}|E_{11}|^{2}\hat{\theta}\wedge d\hat{\theta}  \label{A12}
\end{equation}

\noindent (recall that $E_{11}$ $\in $ $H_{p,q,1},$ $m=p-q).$

\end{document}